\theoremstyle{plain}
\newtheorem{thm}{Theorem}
\newtheorem{prop}[thm]{Proposition}
\theoremstyle{definition}
\newcommand{\R}{\ensuremath{\mathbb{R}}}
\newcommand{\Z}{\ensuremath{\mathbb{Z}}}
\newcommand{\Q}{\ensuremath{\mathbb{Q}}}
\newcommand{\N}{\ensuremath{\mathbb{N}}}
\newcommand{\T}{\ensuremath{{\mathcal T}}}
\renewcommand{\epsilon}{\varepsilon}
\begin{document}

\title{Substitution Tilings with Dense Tile Orientations and $n-$Fold Rotational Symmetry}
\author[bielefeld]{D.~Frettl{\"o}h}
\author[ateneo]{A.L.D.~Say-awen}
\author[ateneo]{M.L.A.N.~De Las Pe\~{n}as}
\address{Ateneo de Manila University, Loyola Heights, Quezon City, Philippines}
\address{Bielefeld University, Postfach 100131, 33501 Bielefeld, Germany}
\date{\today}

\begin{abstract}
It is shown that there are primitive substitution tilings with
dense tile orientations invariant under $n$-fold rotation
for $n \in \{2,3,4,5,6,8\}$. The proof for dense tile orientations uses a
general result about irrationality of angles in certain parallelograms.
\end{abstract}

\maketitle

\section{Introduction}

From the discovery of Penrose tilings in the 70s \cite{Pen} and 
of quasicrystals in the 80s \cite{Shecht84} evolved a theory of
aperiodic order. 
One main method to produce interesting patterns showing aperiodic order
is a tile substitution. For a more precise description see below.
The idea is illustrated in Figure \ref{fig:pinw-2it}: a tile substitution
is a rule of how to enlarge a given {\em prototile} (or a set of several 
prototiles) and dissect it into congruent copies of the prototiles. The 
rule can be iterated to fill larger and larger regions of the plane.
\begin{figure}[hb]
\includegraphics[width=.7\textwidth]{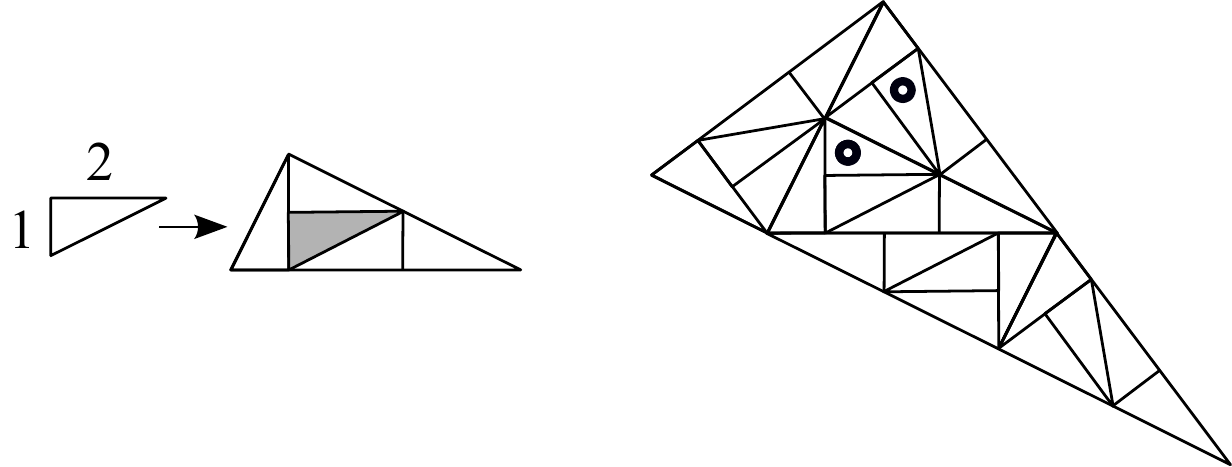}
\caption{Two iterations of the substitution for the pinwheel tiling.
\label{fig:pinw-2it}}
\end{figure}
Formally one considers a fixed point of the substitution: an infinite 
tiling $\T$ of the plane invariant under the substitution rule. This
fixed point $\T$ yields the {\em hull} of the tiling: the closure of 
the image $G \T$ of $\T$, where $G$ is a group acting on $\R^2$
(usually all translations in $\R^2$, or all rigid motions), and
closure is taken with respect to the local topology. For details see below,
for more details see for instance \cite{BaaGri13}.

Several mathematical fields interact in the theory of aperiodic order.
A lot of literature is dedicated to studying the topology of the hull 
of an aperiodic tiling. One way to do this is to compute its 
cohomology groups. For substitution tilings
this can be done by the methods introduced in \cite{AP98}. For the 
pinwheel tiling this was done in \cite{BDHS2010} and \cite{FWW2014}.
One problem for the pinwheel tiling is that the tiles occur in
infinitely many different orientations. More precisely: the
pinwheel tiling has {\em dense tile orientations} (DTO), i.e.
the orientations of the tiles are dense in the circle. For the
treatment of hulls of tilings with DTO in the context of dynamical systems 
see \cite{FreRi}. A further problem is that the hull of the pinwheel 
tiling contains six different tilings invariant under 2-fold rotation. 
These tilings correspond to cone singularities of the quotient 
of the hull by the circle. These give rise to a torsion part in the
second cohomology group $H^2$ of the hull. In particular, $m$ tilings
in the hull that are invariant under $n$-fold rotation contribute
a $\Z_n^{m-1}$ subgroup to $H^2$, where $\Z_n$ denotes the cyclic group of 
order $n$ \cite[Theorem 12]{BDHS2010}.
In view of this problem Jean Savinien \cite{Sav} asked in 2013 
for which values of $n$ primitive substitution tilings with DTO can 
be invariant under $n$-fold rotation. (For the definition of
primitivity see below.) This question motivated
this paper. Our main result is the following.
\begin{thm} \label{thm:main}
There are primitive substitution tilings with DTO that are invariant
under rotation by $\frac{2 \pi}{n}$ for $n \in \{2,3,4,5,6,8\}$. 
These tilings are not mirror symmetric, hence they occur in pairs
for each such $n$.
\end{thm}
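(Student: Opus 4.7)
The strategy I would follow is to construct, for each $n \in \{2,3,4,5,6,8\}$, an explicit primitive substitution $\sigma$ on a finite prototile set $P$ together with a globally $n$-fold symmetric fixed point $\T$. The pinwheel tiling of Conway and Radin already handles $n=2$; the task is to generalise its key features, namely a chiral dissection containing at least one sub-tile placed by a rotation whose angle is an irrational multiple of $\pi$, to each of the remaining values of $n$.

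First I would fix an $n$-fold symmetric \emph{seed}: $n$ congruent wedge-shaped patches arranged around a central point and pairwise related by rotation through $2\pi/n$. Choosing $\sigma$ so that it commutes with this central rotation (that is, the dissection of each prototile is itself rotated along with the tile) guarantees that every iterate $\sigma^k$ applied to the seed is $n$-fold symmetric, so the limiting tiling $\T$ inherits the symmetry. Because the local rule uses only direct isometries, the mirror image of $\T$ cannot coincide with $\T$ up to a rigid motion in the hull. This yields the final clause of the theorem: $\T$ and its reflection form a pair of non-isomorphic tilings realising the same $n$-fold symmetry.

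The central obstacle is arranging DTO. Here I would select the prototiles (parallelograms or triangles) and their dissections so that at least one of the rotations used in the rule has angle $\alpha$ with $\alpha/\pi \notin \Q$. After $k$ substitution steps the set of tile orientations contains $\{\,j\alpha + 2\pi \ell /n : j,\ell \in \Z\,\}$ modulo $2\pi$, which by the Weyl equidistribution criterion is dense in $[0,2\pi)$. The verification that $\alpha/\pi$ is irrational is exactly the role of the ``general result about irrationality of angles in certain parallelograms'' announced in the abstract, which I would apply to the diagonals and sides of the chosen prototiles. For the crystallographically forbidden symmetries $n \in \{5,8\}$ I would place prototile vertices in $\Z[\zeta_n]$, with $\zeta_n = e^{2\pi i/n}$, so that the substitution closes algebraically and the scaling factor inflates $\Z[\zeta_n]$; irrationality of $\alpha$ then follows by the same lemma applied to the pertinent parallelograms.

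Finally, primitivity reduces to checking that some power of the nonnegative integer substitution matrix $M_\sigma$ has strictly positive entries; once the prototile set and the rule are written down, this is a finite verification. I expect the main difficulty of the whole argument to lie not in the abstract assembly above but in the concrete construction itself: designing rules for the ``hard'' cases $n \in \{5,8\}$ (and, to a lesser extent, $n \in \{4,6\}$, where the naive $n$-fold substitution tends to produce only finitely many orientations) that are simultaneously tile-preserving, $n$-fold symmetric around the seed, primitive, and carry an internal rotation angle to which the irrationality lemma actually applies. Once such a rule is exhibited for each $n$, the three properties (rotational symmetry, chirality, DTO) follow by the uniform arguments sketched above.
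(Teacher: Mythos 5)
Your outline reproduces the architecture of the paper's argument: the irrationality lemma for the long diagonal of a $1\times 2$ parallelogram with interior angle $\frac{2\pi}{n}$ supplies the irrational angle, DTO follows from density of the orbit of that angle, the $n$-fold symmetric tiling is obtained as a nested limit of supertiles around an $n$-fold symmetric seed with a substitution that carries the symmetry along, the mirror image supplies the second tiling, and primitivity is a finite matrix check. The difficulty is that for an existence theorem of this kind the construction \emph{is} the proof, and your proposal explicitly defers it. The paper's entire content consists of exhibiting the prototile sets and dissections: the regular $n$-gon $T_1^{(n)}$ of unit edge, the triangle $T_2^{(n)}$ obtained by bisecting the parallelogram, the dissection of $\lambda_n T_1^{(n)}$ into a rotated copy of $T_1^{(n)}$, $n$ copies of $T_2^{(n)}$ and parallelograms, and then the auxiliary tiles needed to close the system ($T_4^{(6)}$ to restore primitivity for $n=6$, the inflated intermediate tiles $T_5^{(8)},T_6^{(8)}$ for $n=8$, and twelve prototiles with edge orientations to secure FLC for $n=5$). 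None of this is routine, and without it the statement is not established; your sketch acknowledges this but does not close the gap.

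Two further points where the sketch is too loose even granting a construction. First, the DTO criterion (Theorem \ref{thm:dto-crit}) requires two \emph{equivalent} tiles --- congruent and of the same chirality/marking --- in a single supertile $\sigma^k(T_i)$ rotated against each other by an irrational angle; ``some rotation used in the rule is irrational'' is not sufficient as stated. In the paper's substitutions the copies of $T_2^{(n)}$ along the boundary of $\sigma_n^2(T_1^{(n)})$ are mirror images of those along the boundary of $\sigma_n(T_1^{(n)})$, so one must pass to $\sigma_n^3(T_1^{(n)})$ before the criterion applies; your argument should account for this. Second, your justification of non-mirror-symmetry (``the local rule uses only direct isometries'') is at odds with the constructions that actually work, which contain both left- and right-handed copies of the scalene triangle prototile; the absence of reflections from the rule would in any case not by itself preclude the fixed point from being congruent to its reflection. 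Non-mirror-symmetry has to be verified on the tiling itself, e.g.\ from the chiral central patch, before one can conclude that the two $n$-fold symmetric tilings in the hull are distinct.
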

Hence we have $m \ge 2$ in the discussion above, and the contributions
$\Z_n^{m-1}$ are not trivial. 
It is likely that the idea carries over to any $n \in \N$, but 
since the proof is constructive (and the constructions become
tedious for large $n$) we can deal only with the small cases here.
The cases $n \in \{3,4,6\}$ are considered in more detail in 
\cite{FreOySavSay}. The case $n=7$ is treated in \cite{april-phd}. The
case $n=2$ is known already to occur in pinwheel tilings.

This paper is organised as follows. Section \ref{sec:basics} contains 
some basic definitions and facts on substitution tilings. Readers 
familiar with this topic may skip this section. 
In order to show that all tilings have DTO we need a result
on the irrationality of certain angles. This is provided in
Theorem \ref{thm:irratangle} in Section \ref{sec:irrat}. 
The construction of the substitution rules is
given in Section \ref{sec:subst}. Theorem \ref{thm:main} is then
a consequence of Propositions \ref{prop:n=3+4}, \ref{prop:n=6},
\ref{prop:n=8}, and \ref{prop:n=5} in Section \ref{sec:subst}.

\section{Basics} \label{sec:basics}

For the purpose of this paper a {\em tile} is a nonempty 
compact set $T \subset \mathbb{R}^{2}$ which is the closure of its 
interior. A {\em tiling} of $\mathbb{R}^{2}$ is a collection of 
tiles $\mathcal{T}=\{T_{i} \, | \, i\in \mathbb{N}\}$ that is a
covering (i.e. $\bigcup_{i \in \mathbb{N}}T_{i}=\mathbb{R}^{2}$) as well
as a a packing (i.e. the intersection of the interiors of any two 
distinct tiles $T_{i}$ and $T_{j}$ is empty).  A finite subset of 
$\mathcal{T}$ is called a {\em patch} of $\mathcal{T}$. A tiling 
$\T$ has {\em finite local complexity} with respect to rigid motions 
(FLC for short) if for any $r>0$
there are only finitely many pairwise non-congruent patches in 
$\mathcal{T}$ fitting into a ball of radius $r$. (In many other 
contexts one may replace ``non-congruent'' by ``not translates of 
each other'', but here the first option is the appropriate one.)

 A tiling $\mathcal{T}$ is {\em nonperiodic}, if $\mathcal{T}+t=
\mathcal{T}$ $(t \in \mathbb{R}^{2})$ implies $t=0$. In addition, 
$\mathcal{T}$ is called {\em aperiodic} if each tiling in the hull 
of $\mathcal{T}$ is nonperiodic. The {\em hull} of the tiling 
$\mathcal{T}$ in $\mathbb{R}^{2}$ is the closure of the set 
$\{x \mathcal{T} \, | \, x \in G\}$ in the local topology.
Usually one takes $G=\R^2$ regarded as translations acting
on $\T$, or $G$ the group of all rigid motions in $\R^2$.
In our case it does not matter which one of the two we choose,
see \cite{FreRi}.
The local topology can be defined via a metric. In this metric
two tilings are $\epsilon$-close if they agree on a large ball
of radius $\frac{1}{\epsilon}$ around the origin, possibly after
a small motion (e.g. a translation by less than $\epsilon$).  
If $\mathcal{T}$ arises from a primitive substitution $\sigma$ one 
may as well speak of the {\em hull of $\sigma$}, since all tilings 
generated by $\sigma$ define the same hull. See for 
instance \cite{Sol97, PyF02, BaaGri13, MueRi, FreRi} for more details.

A {\em substitution rule} $\sigma$ is a simple method to generate 
nonperiodic tilings. A substitution rule consists of several 
{\em prototiles} $T_1, \ldots, T_m$, an {\em inflation factor} 
$\lambda>1$ and for each $i=1, \ldots, m$ a dissection of $\lambda T_i$
into congruent copies of some of the prototiles $T_1, \ldots, T_m$. 
The patch resulting from the dissection is denoted by $\sigma(T_i)$. 
A substitution $\sigma$ can be iterated on the resulting patch,
by inflating the patch by $\lambda$ and dissecting all tiles
according to $\sigma$. Hence it makes sense to write $\sigma^2(T_i)$
or $\sigma^k(T_i)$.   
A simple example is the substitution for the pinwheel tiling shown 
in Figure \ref{fig:pinw-2it}. This substitution uses just one prototile. 
The inflation factor is $\lambda = \sqrt{5}$. One may as well formulate
the pinwheel substitution for two prototiles: if we distinguish a
tile and its mirror image then the pinwheel substitution $\sigma_P$ has 
two prototiles $T_1$ and $T_2$ (where $T_2$ is the mirror image of $T_1$),
the substitution $\sigma_P(T_2)$ is the mirror image of $\sigma_P(T_1)$.

In certain instances we want to consider congruent tiles 
in $\mathcal{T}$ as different. This is achieved by markings
or colours. Two tiles are {\em equivalent} if they are congruent
and have the same marking or colour. See Subsection \ref{subsec:n=5}
below for an example where we consider congruent prototiles as 
different (e.g. $T_3^{(5)}, T_4^{(5)}, T_5^{(5)}$), with a different 
substitution for each prototile. 

Given a substitution $\sigma$ with prototiles $T_1, \ldots, T_m$
a patch of the form $\sigma(T_i)$ is called a {\em supertile}.
More generally, a  patch of the form $\sigma^k(T_i)$ is called 
a {\em $k$-th order supertile}. A substitution rule is called 
{\em primitive} if there is $k \in \N$ such that each $k$-th order 
supertile contains congruent copies of all prototiles.

Equivalently one may define primitivity of a substitution by
an associated matrix. The {\em substitution matrix} of 
a substitution $\sigma$ with prototiles $T_{1}, T_{2}, \cdots, T_{m}$
is $M_{\sigma}:=(a_{ij})_{1 \leq i,j \leq m}$, where $a_{ij}$ is the number 
of tiles equivalent to $T_{i}$ in $\sigma(T_{j})$, $i,j \in \{1,2, 
\ldots, m\}$. The substitution is primitive if and only if its
substitution matrix is primitive, which means that it has some power
containing positive entries only.
Primitivity is an important property for substitutions. One reason
is the following result, the Perron-Frobenius theorem.

\begin{thm}[\cite{Per09}] Let $M$ be a primitive non-negative square 
matrix. Then $M$ has a real eigenvalue $\lambda >0$ which is simple. 
Moreover, $\lambda > |\lambda'|$ for any eigenvalue $\lambda' \neq \lambda $. 
This eigenvalue $\lambda$ is called {\em Perron-Frobenius-eigenvalue} or 
PF-eigenvalue for short. Furthermore, the associated left and right 
eigenvectors of $\lambda$ can be chosen to have positive entries. 
Such eigenvectors are called the {\em left PF-eigenvector} and 
{\em right PF-eigenvector} of $M$.
\end{thm}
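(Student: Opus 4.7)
The plan is to execute the classical Perron--Frobenius argument in four stages: existence of a positive eigenpair, coincidence of the dominant eigenvalues of $M$ and $M^T$, simplicity of $\lambda$, and the strict spectral gap. For existence I would apply Brouwer's fixed-point theorem to the continuous map $x \mapsto Mx/\|Mx\|_1$ on the standard simplex $\Delta = \{x \in \R^n : x \geq 0,\ \sum_i x_i = 1\}$. The map is well defined because primitivity forces $M^k > 0$ componentwise for some $k \in \N$, so no nonnegative nonzero vector is annihilated by $M$. A fixed point $x^{*}$ satisfies $Mx^{*} = \lambda x^{*}$ with $\lambda = \|Mx^{*}\|_1 > 0$, and iterating gives $M^k x^{*} = \lambda^k x^{*}$, whose right-hand side has strictly positive entries, so $x^{*} > 0$. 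The same argument applied to $M^T$ (also primitive, since $(M^T)^k = (M^k)^T$) produces a positive left eigenvector $u$ for some $\mu > 0$; pairing $\lambda\, u^T x^{*} = u^T M x^{*} = \mu\, u^T x^{*}$ together with $u^T x^{*} > 0$ forces $\mu = \lambda$.

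For geometric simplicity, take any real eigenvector $y$ with $My = \lambda y$ (real and imaginary parts of a complex eigenvector work separately) and let $t > 0$ be maximal with $x^{*} - ty \geq 0$. Then $w := x^{*} - ty$ is a nonnegative $\lambda$-eigenvector that has a zero entry; since $M^k w = \lambda^k w$ must be either identically zero or strictly positive, we get $w = 0$, whence $y$ is a scalar multiple of $x^{*}$. Algebraic simplicity follows by pairing against the left eigenvector: any solution of $(M - \lambda I)z = x^{*}$ would satisfy $u^T x^{*} = u^T(M - \lambda I)z = 0$, contradicting $u, x^{*} > 0$. Hence the Jordan block at $\lambda$ is $1 \times 1$.

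For the spectral gap, let $Mv = \mu v$ with $v \in \mathbb{C}^n \setminus \{0\}$. The triangle inequality gives $|\mu|\,|v| \leq M|v|$ componentwise, and pairing with $u > 0$ yields $|\mu|\, u^T |v| \leq \lambda\, u^T |v|$, so $|\mu| \leq \lambda$. If $|\mu| = \lambda$ then equality holds throughout, so $M|v| = \lambda |v|$, and simplicity forces $|v|$ to be a positive multiple of $x^{*}$; in particular $|v_j| > 0$ for every $j$. Fixing a row $i$ of $M^k$ whose entries are all positive, equality in $|\sum_j (M^k)_{ij} v_j| = \sum_j (M^k)_{ij} |v_j|$ forces all $v_j$ to share the same complex argument, so $v = e^{i\theta}|v|$, which makes $\mu$ real and positive and therefore $\mu = \lambda$.

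The hard part is this final step, promoting equality in a row-by-row triangle inequality to a single global phase for $v$. Primitivity (as opposed to mere irreducibility) is what makes the argument painless, since one row of some $M^k$ already sees every coordinate at once; without it one can only conclude $|\mu| \leq \lambda$ with equality permitted for $\lambda$ times certain roots of unity, which is the familiar irreducible-but-not-primitive Perron--Frobenius phenomenon.
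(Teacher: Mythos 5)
The paper does not actually prove this statement: it is the classical Perron--Frobenius theorem, quoted with a citation to Perron (1907) and used as a black box (via Theorem \ref{thm:ev}) to read off prototile areas and frequencies from the substitution matrix. So there is no in-paper argument to compare yours against line by line; what you have supplied is a self-contained proof where the paper offers only a reference. Your proof is correct and essentially the standard one: Brouwer's fixed-point theorem on the simplex for existence of a positive eigenpair (primitivity guaranteeing both that $Mx \neq 0$ on the simplex and that the fixed point is strictly positive), a duality pairing to identify the dominant eigenvalues of $M$ and $M^T$, the ``subtract a multiple of the eigenvector until a coordinate vanishes'' trick for geometric simplicity, pairing a putative Jordan chain against the positive left eigenvector for algebraic simplicity, and the equality case of the triangle inequality in an all-positive row of $M^k$ for the strict spectral gap. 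One small normalization is missing in the simplicity step: for a maximal $t>0$ with $x^{*}-ty\ge 0$ to exist, $y$ must have at least one positive entry, so you should first discard $y=0$ and replace $y$ by $-y$ if $y\le 0$ componentwise; this is a one-line fix, not a gap. Your closing remark correctly isolates where primitivity (rather than mere irreducibility) is used: a single strictly positive row of $M^k$ sees all coordinates of $v$ at once, which is exactly what excludes peripheral eigenvalues of the form $\lambda$ times a nontrivial root of unity.
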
 

Applied to a substitution tiling $\T$ the Perron Frobenius theorem 
has the following consequences, see for instance \cite{PyF02, BaaGri13}.

\begin{thm} \label{thm:ev}
Let $\sigma$ be a primitive substitution in $\mathbb{R}^{2}$ with 
inflation factor $\lambda$ and prototiles $T_{1}, T_{2}, \cdots, 
T_{m}$; let $M_{\sigma}$  be the substitution matrix of $\sigma$. Then 
the PF-eigenvalue of $M_{\sigma}$ is $\lambda^{2}$. The left PF-eigenvector 
contains the areas of the different prototiles, up to scaling. The 
normalised right PF-eigenvector $\boldsymbol{v}=(v_{1}, v_{2}, \cdots, 
v_{m})^{T}$ of $M_{\sigma}$ contains the relative frequencies of the 
prototiles of the tiling in the following sense: The entry $v_{i}$ is 
the relative frequency of $T_{i}$ in $\mathcal{T}$. 
\end{thm}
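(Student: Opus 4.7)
The plan is to derive all three assertions from the very definition of the substitution matrix together with the Perron--Frobenius theorem that has just been stated. I would handle the first two claims simultaneously by bookkeeping with areas. Let $\boldsymbol{A} = (\mathrm{area}(T_1), \ldots, \mathrm{area}(T_m))$ be a row vector. On one hand, the patch $\sigma(T_j)$ is a dissection of $\lambda T_j$, so its total area equals $\lambda^{2}\cdot\mathrm{area}(T_j)$. On the other hand, by the definition $(M_\sigma)_{ij}=a_{ij}$ counts the copies of $T_i$ contained in $\sigma(T_j)$, so the same area also equals $\sum_i a_{ij}\,\mathrm{area}(T_i)=(\boldsymbol{A}M_\sigma)_j$. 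Equating the two gives $\boldsymbol{A}M_\sigma=\lambda^{2}\boldsymbol{A}$. Since $\boldsymbol{A}$ has strictly positive entries, Perron--Frobenius (applied to $M_\sigma^{T}$) forces $\lambda^{2}$ to be the PF-eigenvalue of $M_\sigma$ and $\boldsymbol{A}$ to be the left PF-eigenvector, up to scaling.

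For the frequency claim I would iterate: by definition the number of copies of $T_j$ inside the $k$-th order supertile $\sigma^{k}(T_i)$ equals $(M_\sigma^{k})_{ji}$. By primitivity, Perron--Frobenius further yields the convergence $\lambda^{-2k}M_\sigma^{k}\to\boldsymbol{v}\,\boldsymbol{u}^{T}$, where $\boldsymbol{u}$ is the left and $\boldsymbol{v}$ the right PF-eigenvector, normalised so that $\boldsymbol{u}^{T}\boldsymbol{v}=1$. Hence the proportion of tiles equivalent to $T_j$ in $\sigma^{k}(T_i)$ satisfies
\[
\frac{(M_\sigma^{k})_{ji}}{\sum_\ell (M_\sigma^{k})_{\ell i}}\;\longrightarrow\;\frac{v_j\,u_i}{\bigl(\sum_\ell v_\ell\bigr)u_i}\;=\;\frac{v_j}{\sum_\ell v_\ell},
\]
independently of the seed tile $T_i$. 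If $\boldsymbol{v}$ is normalised so that $\sum_\ell v_\ell=1$, this limit is exactly $v_j$.

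The main subtlety, and the place where I would expect to do real work, is to promote this statement about supertiles to a statement about the tiling $\T$ itself. Here I would use that, by primitivity, $\T$ can be exhausted by arbitrarily large supertiles, so any sufficiently large ball in $\T$ lies, apart from a boundary strip, inside a single high-order supertile $\sigma^{k}(T_i)$. The area of such a supertile grows like $\lambda^{2k}$ while its boundary grows only like $\lambda^{k}$, so the boundary contribution is asymptotically negligible. Combining this with the previous convergence shows that the relative frequency of $T_j$ in balls of $\T$ of growing radius indeed converges to $v_j$, and the theorem follows. References such as \cite{PyF02, BaaGri13} would supply the details for this exhaustion argument.
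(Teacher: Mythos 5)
The paper offers no proof of this theorem: it is stated as a known consequence of Perron--Frobenius with a pointer to \cite{PyF02, BaaGri13}, so there is nothing internal to compare against. Your argument is essentially the standard one from those references, and the first two claims are handled cleanly and correctly: the area bookkeeping gives $\boldsymbol{A}M_\sigma=\lambda^2\boldsymbol{A}$ with $\boldsymbol{A}>0$, and the fact that a primitive non-negative matrix admits a strictly positive (left) eigenvector only for its PF-eigenvalue then identifies $\lambda^2$ and $\boldsymbol{A}$ simultaneously. The counting identity $(M_\sigma^k)_{ji}$ for the number of copies of $T_j$ in $\sigma^k(T_i)$ and the limit $\lambda^{-2k}M_\sigma^k\to\boldsymbol{v}\boldsymbol{u}^T$ are also correct.

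The one step that is stated the wrong way round is the passage from supertiles to the tiling. A large ball that ``lies, apart from a boundary strip, inside a single high-order supertile'' tells you nothing about tile proportions in that ball: the ball could sit anywhere inside the supertile, e.g.\ in a corner, and knowing the tile statistics of the whole supertile does not control the statistics of an arbitrary subregion. The correct exhaustion goes in the opposite direction: for a ball $B_r$ choose $k$ with $\lambda^k=o(r)$ (say $\lambda^k\sim\sqrt{r}$), so that $B_r$ is covered by many \emph{complete} order-$k$ supertiles together with a collection of supertiles meeting $\partial B_r$ whose total area is $O(r\lambda^k)=o(r^2)$; the complete supertiles each have tile proportions close to $v_j/\sum_\ell v_\ell$ by your matrix limit, and the remainder is negligible. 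One should also say a word about why the limit defining ``relative frequency'' exists at all (uniform patch frequencies for primitive substitutions, as in \cite{Sol97}), rather than only proving that subsequential limits along supertile-adapted regions agree. Both points are standard and are exactly what the cited references supply, so this is a repairable imprecision in an otherwise correct sketch rather than a wrong approach.
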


\section{An irrationality result} \label{sec:irrat}

An angle $\theta \in [0, 2 \pi[$ is called {\em irrational} if 
$\theta \notin \pi \Q$. The pinwheel tilings
have indeed DTO because of the fact that the second
order supertile contains two congruent tiles which are rotated
against each other by an irrational angle (see Figure \ref{fig:pinw-2it}
right, the two tiles are marked by dots). The angle here is 
$2 \arctan(1/2)$. It is known that $\arccos(\frac{1}{\sqrt{n}})
\notin \pi \Q$ for $n\ge 3$ odd \cite{AZbook}. Hence $2 \arctan(\frac{1}{2})
= 2(\frac{\pi}{2} - \arccos(\frac{1}{\sqrt{5}}))$ is irrational.
By induction the entire tiling contains tiles that are rotated against 
each other by $n \cdot 2 \arctan(\frac{1}{2}) \mod 2 \pi$ for all $n \in \N$. 
Since $2 \arctan(\frac{1}{2})$ is irrational these values are dense 
in a circle. More generally we have the following result:
\begin{thm}[{\cite[Proposition 3.4]{Fre08}}] \label{thm:dto-crit}
Let $\sigma$ be a primitive substitution in $\mathbb{R}^2$ with prototiles 
$T_{1}, T_{2}, \ldots, T_{m}$. Any substitution tiling in the hull of 
$\sigma$ has DTO if and only if there are $k,i$ such that 
$\sigma^{k}(T_{i})$ contains two equivalent tiles $T$ and $T'$ that are 
rotated against each other by some irrational angle.
\end{thm}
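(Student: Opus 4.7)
The plan is to prove the two directions of the equivalence separately.

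Backward direction ($\Leftarrow$): suppose $\sigma^k(T_i)$ contains two equivalent tiles rotated by an irrational angle $\alpha$. I would show by induction on $n$ that some supertile contains two equivalent tiles rotated by $n\alpha$: given an $n\alpha$-pair whose tiles are equivalent to some prototile $T_{j'}$, primitivity guarantees that a sufficiently high iterate of $T_{j'}$ contains a copy of $\sigma^k(T_i)$, and hence of the original $\alpha$-pair. Substituting this iterate into each member of the $n\alpha$-pair yields two rotated copies of the $\alpha$-pair whose rotational offset is $n\alpha$, and choosing one tile from each copy gives an $(n+1)\alpha$-pair. Since $\alpha\notin\pi\Q$, the orbit $\{n\alpha \bmod 2\pi:n\in\N\}$ is dense in $[0,2\pi)$, and primitivity then propagates this density to every prototile, yielding DTO.

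Forward direction ($\Rightarrow$), via contrapositive: assume that in every $\sigma^k(T_i)$ any two equivalent tiles differ by a rational rotation; I want to show $\T$ does not have DTO. Because for a primitive substitution every finite patch of a tiling in the hull lies in some supertile, the hypothesis extends globally: any two tiles of $\T$ equivalent to a fixed prototile $T_j$ differ by a rational rotation, so the orientations of $T_j$ in $\T$ lie in a single coset $\alpha_j+\pi\Q/2\pi\Z$ of the rational angles.

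The decisive finiteness step is that the rotational parts of the tiles appearing across the finitely many patches $\sigma(T_1),\ldots,\sigma(T_m)$ form a finite set $\Theta\subset\R/2\pi\Z$, and every orientation of a tile in $\T$ is a sum of elements of $\Theta$ along a walk in the substitution graph. Hence every such orientation lies in the finitely generated abelian subgroup $G=\langle\Theta\rangle\subset\R/2\pi\Z$, whose torsion subgroup equals $G\cap(\pi\Q/2\pi\Z)$ and is therefore finite (finitely generated torsion abelian groups are finite). Intersecting with the coset from the previous paragraph, the orientations of $T_j$ in $\T$ lie in $\alpha_j+(G\cap\pi\Q/2\pi\Z)$, a finite set, and DTO fails.

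The main obstacle is exactly this last step. The rational-difference hypothesis alone only confines the orientations of $T_j$ to a single coset of $\pi\Q/2\pi\Z$, which is itself dense in the circle and does not preclude DTO; it is the additional constraint that all orientations lie in the finitely generated subgroup $G$ -- forced by the fact that a substitution uses only finitely many prototiles and finitely many placements -- that collapses this coset down to a finite set and rules out DTO.
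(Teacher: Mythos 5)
First, a remark on the comparison you asked for: the paper does not prove Theorem~\ref{thm:dto-crit} at all --- it is imported verbatim as \cite[Proposition 3.4]{Fre08} --- so there is no in-paper proof to measure your argument against; I can only assess the proposal on its own terms. Your forward direction (via the contrapositive) is sound and nicely packaged: the observation that all tile orientations in any hull tiling lie in a single coset of the finitely generated group $G$ spanned by the finitely many rotational parts occurring in the substitution rule, combined with the rational-difference hypothesis, really does confine the orientations of each prototile to a coset of the torsion subgroup of $G$, which is finite. The only ingredient you should state explicitly is the standard fact that every finite patch of a tiling in the hull of a primitive substitution is congruent to a subpatch of some $\sigma^{k}(T_{i})$; that is what lets you transfer both the membership in $G$ and the rational-difference hypothesis from supertiles to arbitrary hull tilings.

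The backward direction has a genuine gap at its last step. Your induction produces, for each $n$, \emph{some} supertile containing a pair of equivalent tiles whose relative rotation is $n\alpha$; from this you conclude density of orientations because $\{n\alpha \bmod 2\pi\}$ is dense. But ``for every $n$ there exist $o_{n},o_{n}'$ in the orientation set $O$ with $o_{n}'-o_{n}=n\alpha$'' only says that the difference set $O-O$ is dense, and a dense difference set does not force $O$ itself to be dense: for instance $O=\{x_{n},\,x_{n}+n\alpha \bmod 2\pi : n\in\N\}$ with $x_{n}$ chosen so that both members of each pair lie in $[0,\pi]$ satisfies your conclusion while being confined to a half-circle, and Cantor-type sets show the phenomenon persists even for closed sets. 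The repair is available inside your own construction, but you must track more than the two extreme tiles: writing $P_{n+1}=\sigma^{N}(P_{n})$, each equivalent tile of $P_{n}$ at relative orientation $\omega$ spawns in $P_{n+1}$ equivalent tiles at relative orientations $\omega+\gamma$ and $\omega+\gamma+\alpha$ (with $\gamma$ the fixed rotational offset of the $\alpha$-pair inside $\sigma^{N}(T_{j_0})$), so a \emph{single} patch $P_{n}$ contains equivalent tiles at all orientations $\theta_{0}+n\gamma+m\alpha$ for $0\le m\le n$, measured from a common reference. Since $\{m\alpha: 0\le m\le n\}$ is $\epsilon$-dense for $n$ large and every tiling in the hull contains a congruent copy of $P_{n}$, the orientations of that prototile in any hull tiling are $\epsilon$-dense for every $\epsilon>0$, hence dense; primitivity then transports density to the remaining prototiles as you indicate. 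With that strengthening of the induction hypothesis the argument closes.
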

Hence the desired substitutions need to involve some irrational angles.
The following result provides such angles. The authors believe
that this result must be known already, but we are not aware of any 
reference. 
\begin{thm} \label{thm:irratangle}
Let $P$ be a parallelogram with edge lengths 1 and 2 and interior 
angles $\frac{2\pi}{n}$ and $\frac{(n-2)\pi}{n}$, $n \ge 3$. 
Then the angles between the longer diagonal of $P$ and the edges of $P$ 
are irrational. 
\end{thm}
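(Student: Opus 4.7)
The plan is to represent the longer diagonal as a complex number and recast the irrationality of the angle in question as the assertion that a certain element of a cyclotomic field is not a root of unity. Place the vertex of $P$ with acute angle $\beta$ at the origin, with the two emanating edges along the rays through $1$ and $\zeta := e^{i\beta}$. The longer diagonal is then the segment from $0$ to $z = a+b\zeta$ with $\{a,b\} = \{1,2\}$. Writing $\phi := \arg z$ for the angle between the longer diagonal and the edge of length $a$, the other relevant angle is $\beta - \phi$, and since $\beta \in \pi\Q$ it suffices to show $\phi \notin \pi\Q$.

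Write $\beta = 2\pi/m$ where $m = n$ for $n \ge 4$ and $m = 6$ for $n = 3$, so $m \ge 4$ in all cases. The condition $\phi \in \pi\Q$ is equivalent to $\omega := e^{2i\phi} = z/\bar z = (a+b\zeta)/(a+b\zeta^{-1})$ being a root of unity, and $\omega \in \Q(\zeta_m)$. The torsion subgroup of $\Q(\zeta_m)^\times$ is well known to be $\langle \zeta_m \rangle$ when $m$ is even and $\langle \zeta_{2m}\rangle$ when $m$ is odd (in particular, $\Q(\zeta_m) = \Q(\zeta_{2m})$ for $m$ odd), so any candidate $\omega$ has the form $\zeta_m^k$ or $\zeta_{2m}^k$.

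When $m$ is even, $\omega = \zeta_m^k$ forces $\phi = k\pi/m$; the constraint $\phi \in (0, 2\pi/m)$ leaves only $k = 1$, hence $\phi = \beta/2$. But the longer diagonal bisects the acute angle exactly when $a = b$, contradicting $\{a,b\} = \{1,2\}$. When $m$ is odd (so $m \ge 5$), $\omega = \zeta_{2m}^k$ forces $\phi = k\pi/(2m)$ with $k \in \{1,2,3\}$; again $k = 2$ gives $\phi = \beta/2$ and is ruled out. For $k = 1$, equating $\tan(\pi/(2m))$ with the tangent formula $\tan\phi = b\sin\beta/(a+b\cos\beta)$ and simplifying via $\tan(\theta/2) = \sin\theta/(1+\cos\theta)$ and the double-angle identity for $\cos$ yields $a/b = 1 + 2\cos(\pi/m)$. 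For $m \ge 4$ one has $\cos(\pi/m) \ge \cos(\pi/4) = \sqrt{2}/2 > 1/2$, hence $a/b > 2$, ruling out $a/b \in \{1/2, 2\}$. The case $k = 3$ reduces to $k = 1$ by the symmetry $a \leftrightarrow b$.

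The main obstacle is the odd case: one needs the precise description of the roots of unity in $\Q(\zeta_m)$ and a careful trigonometric reduction to $a/b = 1 + 2\cos(\pi/m)$. Everything else is routine bookkeeping.
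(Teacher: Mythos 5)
Your argument is correct, and its core coincides with the paper's: both encode the long diagonal as $z=a+b\zeta_m$, observe that rationality of the angle makes $z/\overline{z}$ a root of unity lying in $\Q(\zeta_m)$, and invoke the classification of roots of unity in cyclotomic fields to conclude that the angle must be a multiple of $\frac{\pi}{m}$ (resp.\ $\frac{\pi}{2m}$ for $m$ odd). Where you part ways is in eliminating the surviving candidates. The paper establishes the sharper geometric sandwich $\frac{\pi}{2n}<\alpha<\frac{\pi}{n}$ by comparing the diagonal with those of two auxiliary parallelograms (a unit rhombus and a parallelogram erected on its long diagonal), so that no admissible multiple fits in the interval and no computation is needed. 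You use only the trivial bound $0<\phi<\beta$, which leaves $k=1$ for $m$ even and $k\in\{1,2,3\}$ for $m$ odd, and then dispatch these by hand: the bisector case forces $a=b$, and the half-angle computation for $k=1,3$ forces $a/b=1+2\cos(\pi/m)>2$. Your endgame costs one trigonometric identity but yields slightly more information: it pins down exactly which side ratios would make the angle rational, which immediately gives the generalisation to arbitrary rational edge lengths $a\ne b$ that the paper only sketches in its conclusion. Your choice to normalise at the acute vertex (taking $m=6$ when $n=3$) also treats $n=3$ on the same footing as the other cases, whereas the paper's placement of the angle $\frac{2\pi}{n}$ at the origin makes that case slightly anomalous.
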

\begin{figure}[h]
\centering
\includegraphics[width=.95\textwidth]{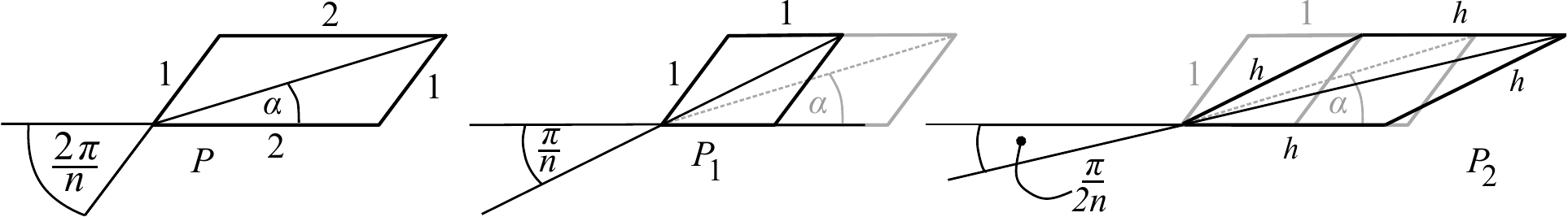}
\caption{The angle $\alpha$ (left); $\alpha$ is smaller than 
$\frac{\pi}{n}$ (middle); $\alpha$ is bigger than $\frac{\pi}{2n}$. 
\label{fig:irrat-winkel}}
\end{figure}
\begin{proof}
Embed $P$ in the complex plane such that the lower left corner of $P$ 
coincides with the origin and the lower base lies along the real axis 
as shown in Figure \ref{fig:irrat-winkel}. So the upper left corner 
of $P$ coincides with the point $\xi_{n}:=e^{2\pi i/n}$ and the upper 
right corner coincides with the point $z=\xi_{n}+2$. 
Let $\alpha$ be the angle between the long diagonal of $P$ and
the $x$-axis, see Figure \ref{fig:irrat-winkel} left. We will show that
$\alpha$ is irrational.

Suppose $\alpha$ is rational. Then there is $m \ge 1$
such that $z^m \in \mathbb{R}$, which yields 
\[ \left(\frac{z}{|z|}\right)^m= \pm 1 \Rightarrow 
\left( \frac{z}{|z|} \right)^{2m}=1 \; \Leftrightarrow  \;
\left( \frac{z^2}{z \overline{z}} \right)^m=1 \; \Leftrightarrow  \; \left( 
\frac{z}{\overline{z}} \right)^m=1, \]
hence $\frac{z}{\overline{z}}$ is some $m$-th root of unity. 

Because $z=\xi_{n}+2 \in \mathbb{Q}(\xi_{n})$ we have 
$\frac{z}{\overline{z}} \in \mathbb{Q}(\xi_{n})$. It is known (compare 
\cite[Exercise 2.3]{Wash97}) that roots of unity in $\mathbb{Q}(\xi_{n})$ 
are of the form $\pm \xi_{n}^{k}$, $0 \le k \le n-1$. Hence 
$\frac{z}{\overline{z}} = \pm \xi_{n}^{k}$ for some 
$k \in \mathbb{N}$. Then $2 \alpha = \arg\left(\frac{z}{\overline{z}}\right)=
\arg\left(\pm \xi_{n}^{k}\right)$, and so $\alpha=\frac{k\pi}{n}$
if $n$ is even, and $\alpha=\frac{k \pi}{2n}$ if $n$ is odd.

For $n$ even consider a second parallelogram $P_1$ with vertices 
$0,1,1+\xi_n,\xi_n$ (compare Figure \ref{fig:irrat-winkel} middle).
Since $\frac{\pi}{n}$ is the angle between 1 and the diagonal 
of $P_1$ we get $0<\alpha<\frac{\pi}{n}$, yielding a contradiction 
for $n$ even. 

For $n$ odd consider a third parallelogram $P_2$ with vertices $0,1+\xi_n,
1+\xi_n+h,h$, where $h$ is the length of the long diagonal of $P_1$
(compare Figure \ref{fig:irrat-winkel} right).  The angle between the
long diagonal of $P_2$ and the real axis is $\frac{\pi}{2n}$.
Since $h>1$ we obtain $\alpha>\frac{\pi}{2n}$. Together with the
reasoning above we get $\frac{\pi}{n}>\alpha>\frac{\pi}{2n}$, 
yielding a contradiction for $n$ odd. Therefore $\alpha$ is irrational.
\end{proof}

\section{Construction of the substitution tilings}
\label{sec:subst}

The general idea for the substitutions is to choose one prototile as
a bisected parallelogram from Theorem \ref{thm:irratangle}. To
be more precise, for $n \ge 3$ odd the prototile $T^{(n)}_2$ is the 
triangle with interior angle $\frac{n-1}{n} \pi$ where the two edges 
forming this angle have length one and two, respectively. 
For $n \ge 4$ even the prototile $T^{(n)}_2$ is the 
triangle with interior angle $\frac{n-2}{n} \pi$ where the two edges 
forming this angle have length one and two, respectively. 
By Theorem \ref{thm:irratangle} the other two angles of this triangle 
are irrational for any $n \ge 3$. A short computation yields the
length $\lambda_n$ of the longest edge as follows:
\begin{equation*}
\lambda_{n}=\left\{ 
\begin{array}{rc}
\sqrt{5+4\cos(\frac{\vphantom{2} \pi}{n})} & \mbox{if $n$ is odd}\\
\sqrt{5+4\cos(\frac{2\pi}{n})} & \mbox{if $n$ is even.}
\end{array}\right.
\end{equation*}
\begin{figure}
\includegraphics[width=.98\textwidth]{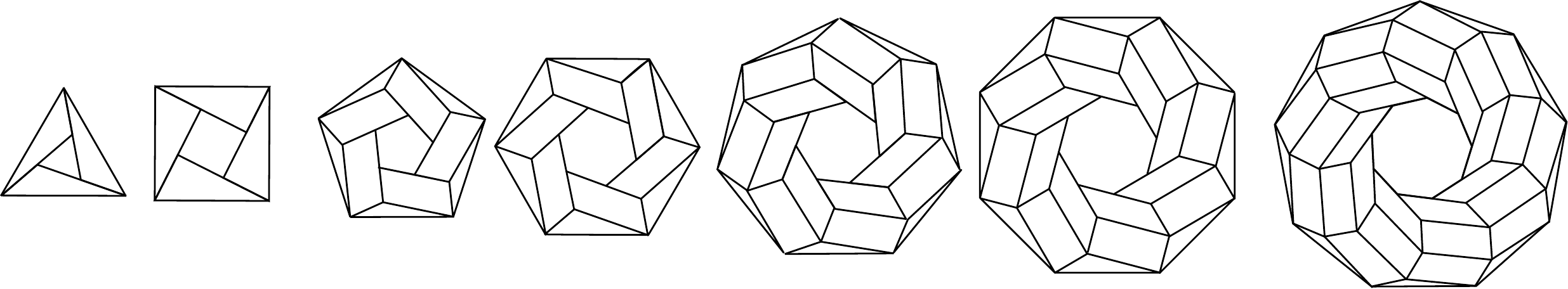}
\caption{Dissection of regular $n$-gons into a small regular $n$-gon 
$T^{(n)}_1$, $n$ triangles $T^{(n)}_2$ and possibly several parallelograms.  
\label{fig:n-ecke}}
\end{figure}
Let $\lambda_n$ be the inflation factor for the desired substitutions 
$\sigma_n$ in the sequel. A regular $n$-gon of side length $\lambda_n$ 
can be dissected into copies of $T^{(n)}_2$ (along its edges), one 
regular $n$-gon with unit edge length
(in its centre), and, if $n \ge 5$, into several parallelograms. 
This dissection is illustrated in Figure \ref{fig:n-ecke} for the 
cases $3 \le n \le 9$. 

In order to construct the desired substitution tilings with DTO being
invariant under $n$-fold rotation one chooses a first prototile 
$T^{(n)}_1$ to be a regular $n$-gon of unit edge length. 
The substitution of $T^{(n)}_1$ is then given by the dissection 
in Figure \ref{fig:n-ecke}. Therefore the inflation
factor equals $\lambda_n$. If one can find a substitution 
for all further prototiles arising in this dissection these substitutions 
are good candidates for DTO tilings since by Theorem \ref{thm:irratangle}
the central $n$-gon of edge length 1 is rotated against the big $n$-gon 
by an irrational angle. Furthermore---given a 
substitution exists for some $n$---the dissection of $\lambda_n T^{(n)}_1$ 
already provides a tiling invariant under $n$-fold rotation (given a
substitution for all tiles exists at all) since it may serve as a 
seed for a fixed point of $\sigma_n$ with $T^{(n)}_1$ in the centre.
(To be precise, one needs to define $\sigma_n$ including a rotational
part in order to take care of the different orientations of the
large and the small regular $n$-gons.)

\subsection{The 3-fold and 4-fold tilings}

The substitutions for $n \in \{3,4\}$ need only two prototiles. Two 
possible substitutions $\sigma_3$ and $\sigma_4$ are shown in Figure 
\ref{fig:subst-3-4}.
\begin{figure}
\includegraphics[width=90mm]{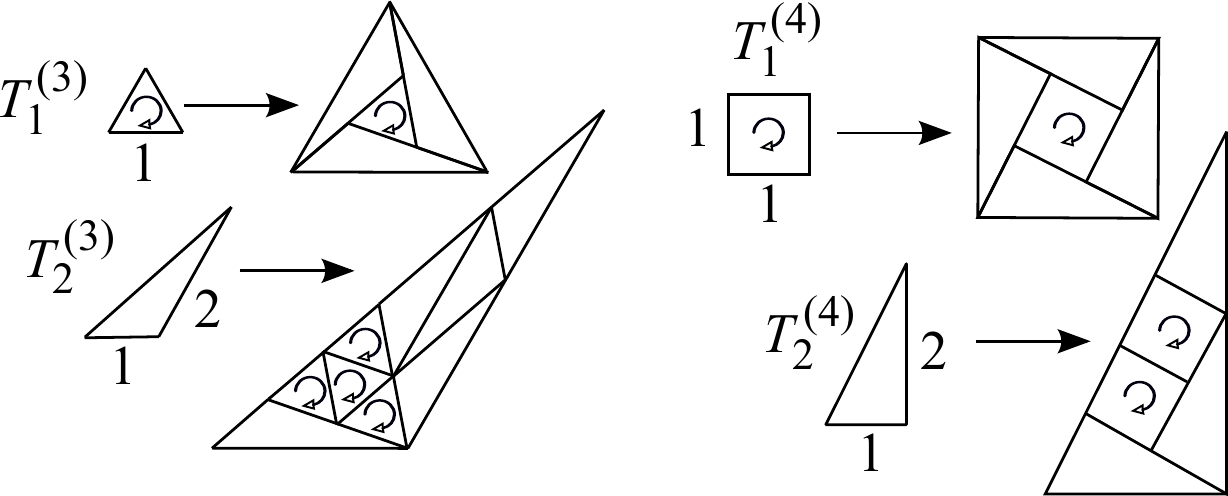}
\caption{The substitutions $\sigma_4$ (right) and $\sigma_3$
(left). For tiles with non-trivial symmetry the arrows indicate
the chirality of the tiles. By choice, all symmetric tiles in the images
are right-handed copies. \label{fig:subst-3-4}}
\end{figure}
\begin{prop} \label{prop:n=3+4}
For $n \in \{3,4\}$ holds: 
The substitution $\sigma_n$ is a primitive substitution with DTO. 
The hull of $\sigma_n$ contains two aperiodic tilings invariant under 
$n$-fold rotation. Any tiling in the hull of $\sigma_n$ is 
FLC with respect to rigid motions.
\end{prop}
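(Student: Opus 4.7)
The proposition has four assertions: primitivity, existence of two aperiodic $n$-fold symmetric tilings in the hull, DTO, and FLC with respect to rigid motions. I plan to verify them in that order.

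For primitivity, I would read the $2\times 2$ substitution matrix $M_{\sigma_n}$ off Figure~\ref{fig:subst-3-4} by counting the copies of each prototile in each supertile; since each of $\sigma_n(T_1^{(n)})$ and $\sigma_n(T_2^{(n)})$ visibly contains at least one copy of both $T_1^{(n)}$ and $T_2^{(n)}$, the matrix $M_{\sigma_n}$ already has strictly positive entries, so $\sigma_n$ is primitive. For the two $n$-fold symmetric tilings, I would take the dissection of $\lambda_n T_1^{(n)}$ in Figure~\ref{fig:n-ecke} as a seed: this patch is invariant under rotation by $2\pi/n$ about the center of $T_1^{(n)}$, so iterating $\sigma_n$ on this seed produces a fixed-point tiling of $\mathbb{R}^2$ carrying that rotational symmetry. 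Since $\sigma_n$ itself is not mirror symmetric (indicated by the chirality arrows in Figure~\ref{fig:subst-3-4}), its mirror image gives a second, inequivalent $n$-fold symmetric fixed point in the hull.

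For DTO I would apply Theorem~\ref{thm:dto-crit}. By the construction of Section~\ref{sec:subst}, the central copy of $T_1^{(n)}$ inside $\sigma_n(T_1^{(n)})$ is rotated, relative to the outer regular $n$-gon, by exactly the angle $\theta$ between the long diagonal of the $1\times 2$ parallelogram $P$ of Theorem~\ref{thm:irratangle} and one of its edges; this angle is irrational. Iterating once more, the innermost $T_1^{(n)}$ in $\sigma_n^2(T_1^{(n)})$ is rotated by $2\theta$ from the outer frame, while every $T_1^{(n)}$ appearing in the outer region of $\sigma_n^2(T_1^{(n)})$ arises by substituting one of the outer $T_2^{(n)}$-triangles of $\sigma_n(T_1^{(n)})$, whose orientations are rational multiples of $2\pi/n$; such an outer $T_1^{(n)}$ therefore has orientation equal to a rational multiple of $\pi$ plus the fixed relative orientation $\phi$ of the $T_1^{(n)}$-copy inside $\sigma_n(T_2^{(n)})$ in canonical position. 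Reading $\phi$ off Figure~\ref{fig:subst-3-4} as a rational multiple of $\pi$, the difference in orientation between the innermost and some outer $T_1^{(n)}$ is $2\theta$ modulo $\pi\mathbb{Q}$, hence irrational. Theorem~\ref{thm:dto-crit} then delivers DTO.

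Aperiodicity is immediate: a nonzero translation symmetry of any tiling in the hull would force tile orientations to split into finitely many translation orbits and hence form a finite set, contradicting DTO (which, by Theorem~\ref{thm:dto-crit}, is a property of every tiling in the hull of $\sigma_n$ once it holds for one). For FLC with respect to rigid motions I would argue by a supertile-covering estimate: for each $r>0$ choose $k$ with $\lambda_n^k$ much larger than $r$; any patch of diameter at most $r$ is contained in a bounded number of adjacent $k$-th order supertiles, and with only two prototiles there are finitely many $k$-th order supertiles up to rigid motion, each admitting only finitely many placements of a patch of diameter at most $r$ up to congruence. The main obstacle in this plan is the orientation bookkeeping for DTO: the irrationality conclusion hinges on verifying from Figure~\ref{fig:subst-3-4} that the canonical orientation $\phi$ of the $T_1^{(n)}$ inside $\sigma_n(T_2^{(n)})$ really is a rational multiple of $\pi$, which must be checked explicitly for $n=3$ and $n=4$.
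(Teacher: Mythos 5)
Your aperiodicity argument has a genuine gap. A single nonzero period $t$ does not force the tiles into finitely many translation orbits: the fundamental domain of the group $t\mathbb{Z}$ is an unbounded strip containing infinitely many tiles, so a tiling could in principle admit one period and still have infinitely many (even dense) tile orientations. Your DTO contradiction therefore only excludes a full rank-two lattice of periods, whereas nonperiodicity as defined here requires excluding every single nonzero period. The paper instead proves this via unique composition (recognizability): the first-order supertiles of $\sigma_3$ and $\sigma_4$ can be identified uniquely inside any tiling of the hull (e.g.\ each isolated $T_1^{(3)}$ lies in a unique copy of $\sigma_3(T_1^{(3)})$), and then \cite[Theorem 10.1.1]{GruShe86} applies; some such recognizability step is unavoidable. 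A related, smaller problem is your construction of the symmetric tiling: iterating the plain $\sigma_n$ on the seed does not give a nested, convergent sequence of patches, because the central $n$-gon of $\sigma_n(T_1^{(n)})$ is rotated against the outer one by the irrational angle $\alpha$. One must iterate $R_\alpha\sigma_n$, as the paper does (or extract a convergent subsequence), and one must also argue that the resulting patches are legal, i.e.\ occur in tilings of the hull.

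Your FLC argument is circular. Covering a radius-$r$ patch by boundedly many adjacent $k$-th order supertiles does not finish the proof: the number of congruence classes of such patches is controlled by the number of relative positions in which two tiles (hence two supertiles) can be adjacent, and a priori this can be a continuum --- tiles sliding along a shared edge is exactly the fault-line phenomenon that prevents the authors from establishing FLC for $\sigma_8$. The paper closes this by introducing a pseudo-vertex at the midpoint of the length-$2$ edge of $T_2^{(n)}$, after which the tilings are vertex-to-vertex and two tiles can only meet in finitely many ways. Finally, note that your DTO argument takes a different (plausible but unverified) route through copies of the regular $n$-gon $T_1^{(n)}$ inside $\sigma_n^2(T_1^{(n)})$; the paper instead compares copies of the asymmetric triangle $T_2^{(n)}$ along the boundaries of $\sigma_n(T_1^{(n)})$ and $\sigma_n^3(T_1^{(n)})$, which differ by the irrational rotation $2\alpha$ (the second power yields only mirror images), thereby avoiding the orientation bookkeeping you yourself flag as the weak point.
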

\begin{proof}
Obviously the substitutions are primitive, the substitution matrices
are $M_{\sigma_3} = \big( \begin{smallmatrix} 1 & 4\\ 3 & 5 \end{smallmatrix} 
\big)$ and $M_{\sigma_4} = \big( \begin{smallmatrix} 1 & 2\\ 4 & 3 
\end{smallmatrix} \big)$.

Theorems \ref{thm:irratangle} and \ref{thm:dto-crit} imply that the 
tilings have DTO as follows. Let $\alpha$ denote the smallest interior
angle of $T_2^{(3)}$. Figure \ref{fig:3fd-3it} shows the situation
for $\sigma_3$: the grey shaded tile on the boundary of 
$\sigma_3^3(T_1^{(3)})$ is rotated by $2 \alpha$ against the grey shaded 
tile in the centre. Since $2 \alpha$ is irrational by Theorem 
\ref{thm:irratangle}, DTO of the tiling in the hull of $\sigma_3$
follows by Theorem \ref{thm:dto-crit}. This phenomenon appears
in all substitutions considered here and in the sequel: 
copies of $T_2^{(n)}$ are lined up
along the boundary of $\sigma_n(T_1^{(n)})$. Mirror images of $T_2^{(n)}$ 
are lined up along the boundary of $\sigma_n^2(T_1^{(n)})$ (since they
are mirror images Theorem
\ref{thm:dto-crit} does not apply here already), and rotated copies of
$T_2^{(n)}$ are lined up along the boundary of $\sigma_n^3(T_1^{(n)})$.
The boundaries of $\sigma_n(T_1^{(n)})$ and $\sigma_n^3(T_1^{(n)})$ 
are rotated against each other by $2 \alpha$, thus the 
triangles $T_2^{(n)}$ are rotated against each other by $2 \alpha$.

In order to show that the tilings are aperiodic it suffices to show
that the substitution $\sigma_n$ has a unique inverse on the hull
\cite[Theorem 10.1.1]{GruShe86}, see also \cite{Sol98,BaaGri13}. In the 
cases $n \in \{3,4\}$ this is particularly simple: For $n=3$ note that 
each isolated regular triangle $T^{(3)}_1$ is contained in a supertile
$\sigma_3(T_1^{(3)})$, hence the supertiles $\sigma_3(T_1^{(3)})$ can all be
identified uniquely. The remaining part of the tiling consists of supertiles 
$\sigma_3(T_2^{(3)})$, and the patches of four connected $T_1^{(3)}$ determine
the exact location and orientation of these supertiles. A similar
reasoning works for $n=4$. 

Let $R_\alpha$ denote the rotation about the origin through 
$\alpha$. Let $T_1^{(n)}$
be centred in the origin. Then $R_{\alpha} \sigma_n(T_1^{(n)})$ contains 
$T_1^{(n)}$ in its interior. Consequently, $(R_{\alpha} \sigma_n)^k(T_1^{(n)})$ 
contains $(R_{\alpha} \sigma_n)^{k-1}(T_1^{(n)})$ in its interior.
(Figure \ref{fig:3fd-3it} shows  $(R_{\alpha} \sigma_3)^k(T_1^{(3)})$ 
for $k=0,1,2,3$.) Hence $\big( 
(R_{\alpha} \sigma_n)^k(T_1^{(n)}) \big)_{k \in \N}$ is a nested sequence 
that converges in the local topology. Note that the local topology is 
usually defined for tilings, not for finite patches. Hence here we need
to extend the usual definition to patches, too, which is straightforward.
(Alternatively, one may extend the finite patches to tilings by
adding tiles. This does not change anything since we are only interested
in the central patches.) The limit is a tiling $\T$ that is fixed under 
$R_{\alpha} \sigma_n$. Since the tilings in the hull have DTO 
the hull is invariant under rotations. Thus the patches
$(R_{\alpha} \sigma_n)^k_n(T_1^{(n)})$ are legal in the sense that they 
are contained in tilings in the hull. Hence $\T$ is indeed contained 
in the hull. Since mirror images of all tiles occur also in all
tilings in the hull, the mirror image of $\T$ is also contained in 
the hull, yielding a second tiling invariant under $n$-fold rotation.
\begin{figure}[t]
\includegraphics[width=.9\textwidth]{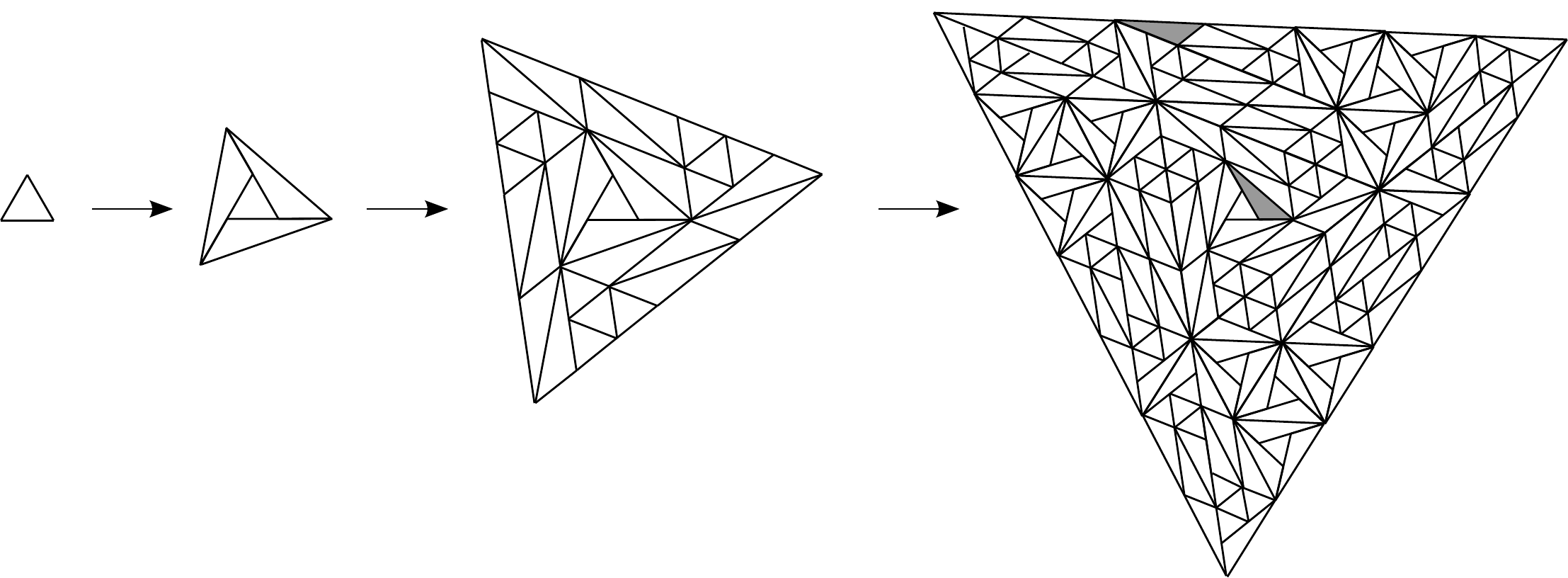}
\caption{Three iterations of $R_{\alpha} \sigma_3$ on $T_1^{(3)}$. The third 
order supertile $\sigma^3_3(T_1^{(3)})$ contains two copies of $T_2^{(3)}$ that
are rotated against each other by an irrational angle. \label{fig:3fd-3it}}
\end{figure}

We sketch why the tilings have FLC with respect to rigid motions. 
The simplest way to see this 
is to introduce an additional (pseudo-)vertex at the midpoint of the 
edge of length 2 in $T_2^{(n)}$. Taking into account this pseudo-vertex
the tilings are vertex-to-vertex. A standard argument implies that
the tilings are FLC. (There are finitely many ways how two tiles can 
touch each other, hence there are only finitely many possible patches 
fitting into a ball of radius $r$. A complete proof of FLC would need 
a list of all possible ways that two tiles can touch each other, 
e.g.\ a list of all vertex stars. Such a list is contained in 
\cite{FreOySavSay} for $n=3,4$. More details appear in \cite{april-phd}.)
\end{proof}

\subsection{The 6-fold tiling}

For $n=6$ we get the inflation factor $\lambda_{6}=\sqrt{5+4
\cos(\frac{2\pi}{6})}=\sqrt{7}$. The substitution $\sigma_6$ is shown 
in Figure \ref{fig:subst-6}. We need to introduce an additional tile
$T_4^{(6)}$ in order to ensure primitivity: $\lambda_6 T_3^{(6)}$ can
be dissected into congruent copies of $T_2^{(6)}$ and $T_3^{(6)}$,
but then $T_1^{(6)}$ would not occur in any of the supertiles
of $T_2^{(6)}$ and $T_3^{(6)}$, hence the substitution would not be 
primitive.

\begin{figure}[h]
\includegraphics[width=.9\textwidth]{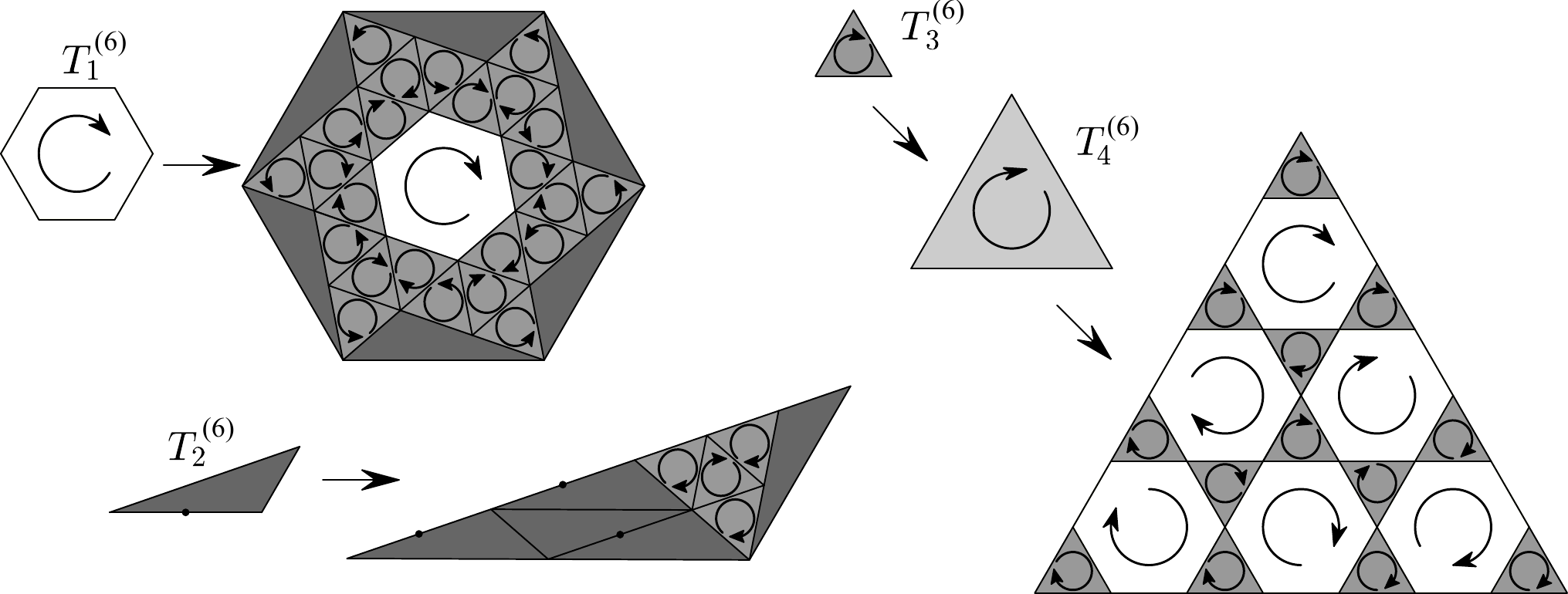}
\caption{The substitution $\sigma_{6}$. \label{fig:subst-6}}
\end{figure}

The substitution $\sigma_{6}$ has the substitution matrix
\[M_{\sigma_{6}}=\begin{pmatrix}
 1&0&0&6 \ \\       
 6&5&0&0 \ \\ 
 24&4&0&13 \ \\ 
 0&0&1&0 \ \\ 
\end{pmatrix},\]
which is primitive because $M_{\sigma_{6}}^{k}$ contains only 
positive entries for all $k \geq 3$. The corresponding PF-eigenvalue 
of $M_{\sigma_{6}}$ is $\lambda_{6}^{2}={7}$ with left PF-eigenvector 
$(6,2,1,7)$ and normalised right PF-eigenvector   
$(\frac{1}{12},\frac{1}{4},\frac{7}{12},\frac{1}{12})^T$.
By Theorem \ref{thm:ev} the left PF-eigenvector contains 
the areas of the tiles up to scaling, and the normalised right 
PF-eigenvector contains the relative frequencies of the tiles.
The latter can serve as a starting point for computing the frequency
module of the tilings. See \cite{FreOySavSay} for the computation of
the  frequency module of $\sigma_4$ by these means.

\begin{prop} \label{prop:n=6}
The substitution $\sigma_6$ is a primitive substitution with DTO. 
The hull contains two aperiodic tilings invariant under 6-fold 
rotation. Any tiling in the hull of $\sigma_6$  is FLC 
with respect to rigid motions.
\end{prop}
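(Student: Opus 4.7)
The plan is to follow the template of the proof of Proposition \ref{prop:n=3+4} closely; the only new feature is the auxiliary prototile $T_4^{(6)}$ that was introduced to guarantee primitivity. Primitivity itself has already been verified at the matrix level by the positivity of $M_{\sigma_6}^k$ for $k\ge 3$, so what remains is to handle (i)~DTO, (ii)~the two $6$-fold symmetric fixed tilings in the hull, (iii)~aperiodicity, and (iv)~FLC.

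For (i) I would inspect, as in the proof of Proposition \ref{prop:n=3+4}, the boundaries of the low-order supertiles of $T_1^{(6)}$: copies of $T_2^{(6)}$ lie along the boundary of $\sigma_6(T_1^{(6)})$, mirror images of $T_2^{(6)}$ along the boundary of $\sigma_6^2(T_1^{(6)})$, and rotated copies (equivalent in the sense of Section~\ref{sec:basics} to those of the first supertile) along the boundary of $\sigma_6^3(T_1^{(6)})$. The relative rotation between the two equivalent copies is $2\alpha$, where $\alpha$ denotes the smallest interior angle of $T_2^{(6)}$, i.e.\ the angle between the long diagonal and the long edge of the associated parallelogram in Theorem \ref{thm:irratangle} with $n=6$. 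That theorem gives $\alpha\notin\pi\mathbb{Q}$, and then Theorem \ref{thm:dto-crit} yields DTO.

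For (ii) the same construction as in the proof of Proposition \ref{prop:n=3+4} carries over: centre $T_1^{(6)}$ at the origin and observe that $R_{\alpha}\sigma_6(T_1^{(6)})$ contains $T_1^{(6)}$ in its interior (the small central $6$-gon of the dissection in Figure \ref{fig:n-ecke}). Hence the nested sequence $\bigl((R_{\alpha}\sigma_6)^k(T_1^{(6)})\bigr)_{k\in\mathbb{N}}$ converges in the local topology to a tiling $\mathcal{T}$ fixed under $R_{\alpha}\sigma_6$ and thus invariant under rotation by $\tfrac{2\pi}{6}$. Since DTO makes the hull closed under all rotations and $\sigma_6$ is not mirror-symmetric, the mirror image of $\mathcal{T}$ is a second such fixed point.

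The step I expect to be the main obstacle is (iii), aperiodicity, which requires unique recognisability of the supertiles (as in \cite[Theorem 10.1.1]{GruShe86}). Here the auxiliary prototile actually helps: reading off $M_{\sigma_6}$, the tile $T_4^{(6)}$ occurs only as the image $\sigma_6(T_3^{(6)})$, so every occurrence of $T_4^{(6)}$ in a tiling is its own $\sigma_6(T_3^{(6)})$-supertile and is identified unambiguously. The small hexagon $T_1^{(6)}$ appears only as the central tile of $\sigma_6(T_1^{(6)})$ and as one of six tiles along the boundary of $\sigma_6(T_4^{(6)})$; the local environment separates these two cases, which identifies all $\sigma_6(T_1^{(6)})$-supertiles. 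What remains of the tiling is then forced to consist of $\sigma_6(T_2^{(6)})$-supertiles whose position and orientation are fixed by the supertiles already identified around them. Finally, (iv) is handled exactly as in Proposition \ref{prop:n=3+4}: inserting a pseudo-vertex at the midpoint of each edge of length $2$ turns every tiling in the hull vertex-to-vertex, and the standard finite-vertex-star bookkeeping then yields FLC with respect to rigid motions.
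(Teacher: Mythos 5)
Your proposal is correct and follows essentially the same route as the paper's proof: DTO, the two $6$-fold fixed points, and FLC are transferred from the argument for Proposition \ref{prop:n=3+4}, and aperiodicity is obtained by uniquely recognising first-order supertiles starting from $T_4^{(6)}$ (which occurs only as $\sigma_6(T_3^{(6)})$) and then distinguishing the hexagons in $\sigma_6(T_4^{(6)})$ from the isolated central hexagons of $\sigma_6(T_1^{(6)})$. The only cosmetic difference is that the paper phrases the latter step via the patch of six connected hexagons occurring only in $\sigma_6(T_4^{(6)})$, which is exactly the ``local environment'' criterion you invoke.
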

\begin{proof}
The proofs of DTO, FLC and the existence of a tiling invariant under 6-fold rotation
are very much along the lines in the proof of Proposition \ref{prop:n=3+4}. 
Aperiodicity of the tilings can be proven similarly by identifying the
first order supertiles uniquely: the tile $T_4^{(6)}$ occurs only as the 
supertile $\sigma_6(T_3^{(6)})$. A patch of six connected hexagons $T_1^{(6)}$ 
occurs only in the supertile $\sigma_6(T_4^{(6)})$. All remaining
hexagons $T_1^{(6)}$ determine the supertiles $\sigma(T_1^{(6)})$. All
remaining supertiles are $\sigma_6(T_2^{(6)})$. For more thorough 
proofs see \cite{april-phd}.
\end{proof}

\subsection{The 8-fold tiling}

For $n=8$ we get the inflation factor $\lambda_{8}=\sqrt{5+4\cos(\frac{2\pi}{8})}
=\sqrt{5+2 \sqrt{2}}$. The substitution rule is shown in Figure \ref{fig:subst-8}.
Since $\lambda_8 T_3^{(8)}$ and $\lambda_8 T_4^{(8)}$ cannot be dissected 
into copies of the prototiles
$T_1^{(8)}, T_2^{(8)}, T_3^{(8)}, T_4^{(8)}$ we need to introduce intermediate tiles
$T_5^{(8)} := \lambda_8 T_3^{(8)}$ and $T_6^{(8)} := \lambda_8 T_4^{(8)}$ in order to
define a substitution rule. Note that we need to define an orientation
on the tiles in order to distinguish a tile from its mirror image. This
is not indicated in the figure. There are several possibilities to do so.
One possibility is letting all tiles in figure have the same
orientation. The only point where this really matters is that the
tile $T_1^{(8)}$ in $\sigma_8(T_1^{(8)})$ has the same orientation as 
the prototile $T_1^{(8)}$ in order to ensure DTO.

\begin{figure}[h]
\includegraphics[width=.95\textwidth]{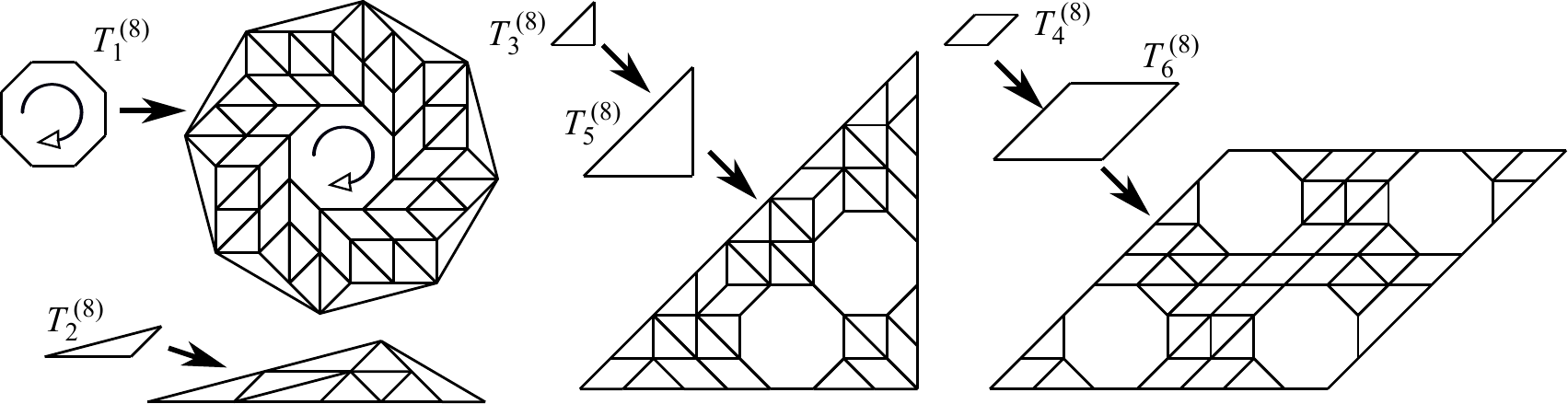}
\caption{The substitution $\sigma_{8}$. Orientations and chiralities 
of symmetric tiles are arbitrary if not shown in the image. 
\label{fig:subst-8}}
\end{figure}

\begin{prop} \label{prop:n=8}
The substitution $\sigma_8$ is a primitive substitution with DTO. 
The hull contains two aperiodic tilings invariant under 8-fold rotation.
\end{prop}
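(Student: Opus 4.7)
The plan is to mirror the four-part template used for $\sigma_3,\sigma_4,\sigma_6$ in Propositions \ref{prop:n=3+4} and \ref{prop:n=6}: establish primitivity, DTO, existence of two 8-fold rotationally symmetric tilings, and aperiodicity, in that order.

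For primitivity I would first write down the $6\times 6$ substitution matrix $M_{\sigma_8}$ by counting, from Figure \ref{fig:subst-8}, how many equivalent copies of $T_i^{(8)}$ appear in each $\sigma_8(T_j^{(8)})$. Because $T_5^{(8)}=\lambda_8 T_3^{(8)}$ and $T_6^{(8)}=\lambda_8 T_4^{(8)}$, we have $\sigma_8(T_5^{(8)})=\sigma_8^2(T_3^{(8)})$ and $\sigma_8(T_6^{(8)})=\sigma_8^2(T_4^{(8)})$ up to rigid motion, so one extra iteration puts all six tile types into every column of $M_{\sigma_8}^k$ for small $k$; the PF-eigenvalue is then $\lambda_8^2=5+2\sqrt{2}$ by Theorem \ref{thm:ev}. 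DTO then follows from Theorems \ref{thm:irratangle} and \ref{thm:dto-crit} exactly as in Proposition \ref{prop:n=3+4}: copies of $T_2^{(8)}$ line the boundary of $\sigma_8(T_1^{(8)})$, mirror images line the boundary of $\sigma_8^2(T_1^{(8)})$, and once-rotated copies line the boundary of $\sigma_8^3(T_1^{(8)})$; since $T_2^{(8)}$ is half of the parallelogram of Theorem \ref{thm:irratangle} with $n=8$ (even case), the angle $2\alpha$ between these two boundaries is irrational, so $\sigma_8^3(T_1^{(8)})$ contains two equivalent tiles rotated by an irrational angle.

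For the two 8-fold rotationally symmetric tilings I would place $T_1^{(8)}$ at the origin. The orientation convention stated after Figure \ref{fig:subst-8} is exactly what is needed: the central small octagon inside $\sigma_8(T_1^{(8)})$ has the same orientation as the prototile $T_1^{(8)}$, so $R_\alpha \sigma_8(T_1^{(8)})$ contains $T_1^{(8)}$ in its interior. Iterating produces a nested sequence $\bigl((R_\alpha \sigma_8)^k(T_1^{(8)})\bigr)_{k\in\N}$ which converges in the local topology to a tiling $\T$ fixed by $R_\alpha\sigma_8$ and invariant under rotation by $\pi/4$; legality of the patches (and hence $\T\in$ hull) follows from DTO exactly as in the proof of Proposition \ref{prop:n=3+4}. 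The mirror image of $\T$ lies in the hull as well and is a second such tiling, distinct from $\T$ because the tilings are not mirror symmetric.

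The main obstacle will be aperiodicity. By \cite[Theorem 10.1.1]{GruShe86} it suffices to show that $\sigma_8$ has a unique inverse on the hull, i.e.\ that the first-order supertiles are uniquely recognisable from the tiling. The difficulty here, compared with the cases $n\in\{3,4,6\}$, is the pair of auxiliary prototiles $T_5^{(8)},T_6^{(8)}$, whose $\sigma_8$-images are themselves second-order supertiles of $T_3^{(8)},T_4^{(8)}$. My plan is a layered identification: first locate each $\sigma_8(T_1^{(8)})$ from its central octagon (the only $T_1^{(8)}$ surrounded by the characteristic ring of $T_2^{(8)}$-triangles and parallelograms visible in Figure \ref{fig:subst-8}); then peel off $\sigma_8(T_2^{(8)})$, $\sigma_8(T_3^{(8)})$ and $\sigma_8(T_4^{(8)})$ using the remaining triangles and parallelograms as markers; and finally recognise $\sigma_8(T_5^{(8)})$ and $\sigma_8(T_6^{(8)})$ from the distinctive larger patches they enclose. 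Once unique recognisability is established, every tiling in the hull is nonperiodic, so all tilings in the hull are aperiodic, as required.
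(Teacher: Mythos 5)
Your proposal follows essentially the same route as the paper's proof: primitivity via positivity of a power of $M_{\sigma_8}$, DTO and the rotationally invariant fixed point exactly as in Proposition \ref{prop:n=3+4}, and aperiodicity by unique recognisability of first-order supertiles (which the paper likewise only sketches, deferring details to \cite{april-phd}). You also correctly refrain from claiming FLC, which the paper deliberately excludes from this proposition.
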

\begin{proof}
Again the proofs of DTO and the existence of a tiling invariant 
under 8-fold rotation
are very much along the lines in the proof of Proposition \ref{prop:n=3+4}.
Aperiodicity of the tilings 
can be proven similarly as above by identifying the
first order supertiles uniquely. More details appear in \cite{april-phd}. 
The primitivity can be checked via the substitution matrix 
\[ M_{\sigma_8} = \begin{pmatrix} 
1 & 0 & 0 & 0 & 2 & 4\\
8 & 5 & 0 & 0 & 0 & 0\\
32& 4 & 0 & 0 &25 &24\\
16& 0 & 0 & 0 &12 &17\\
0 & 0 & 1 & 0 & 0 & 0\\
0 & 0 & 0 & 1 & 0 & 0\\
\end{pmatrix}. \]
$M_{\sigma_8}^4$ has only positive entries. 
\end{proof}

We suppose that the tilings in the hull of $\sigma_8$ have also FLC. 
But since the tilings---with or without pseudo-vertices---are not 
vertex-to-vertex (this can be seen in $\sigma_8^3(T^{(8)}_1)$ for 
instance) a rigorous proof will be
rather lengthy. For details we refer to \cite{april-phd}.

\subsection{The 5-fold tiling} \label{subsec:n=5}

It is possible to define the desired substitution for $n=5$ using 
just six prototiles.
However the tilings may not have FLC. In order to ensure FLC we define 
a substitution $\sigma_{5}$ with 12 prototiles. The inflation factor is 
$\lambda_{5}=\sqrt{6+\sqrt{5}}$. The substitution rule is shown in Figure 
\ref{fig:subst-5}.

\begin{figure}[h]
\includegraphics[width=.9\textwidth]{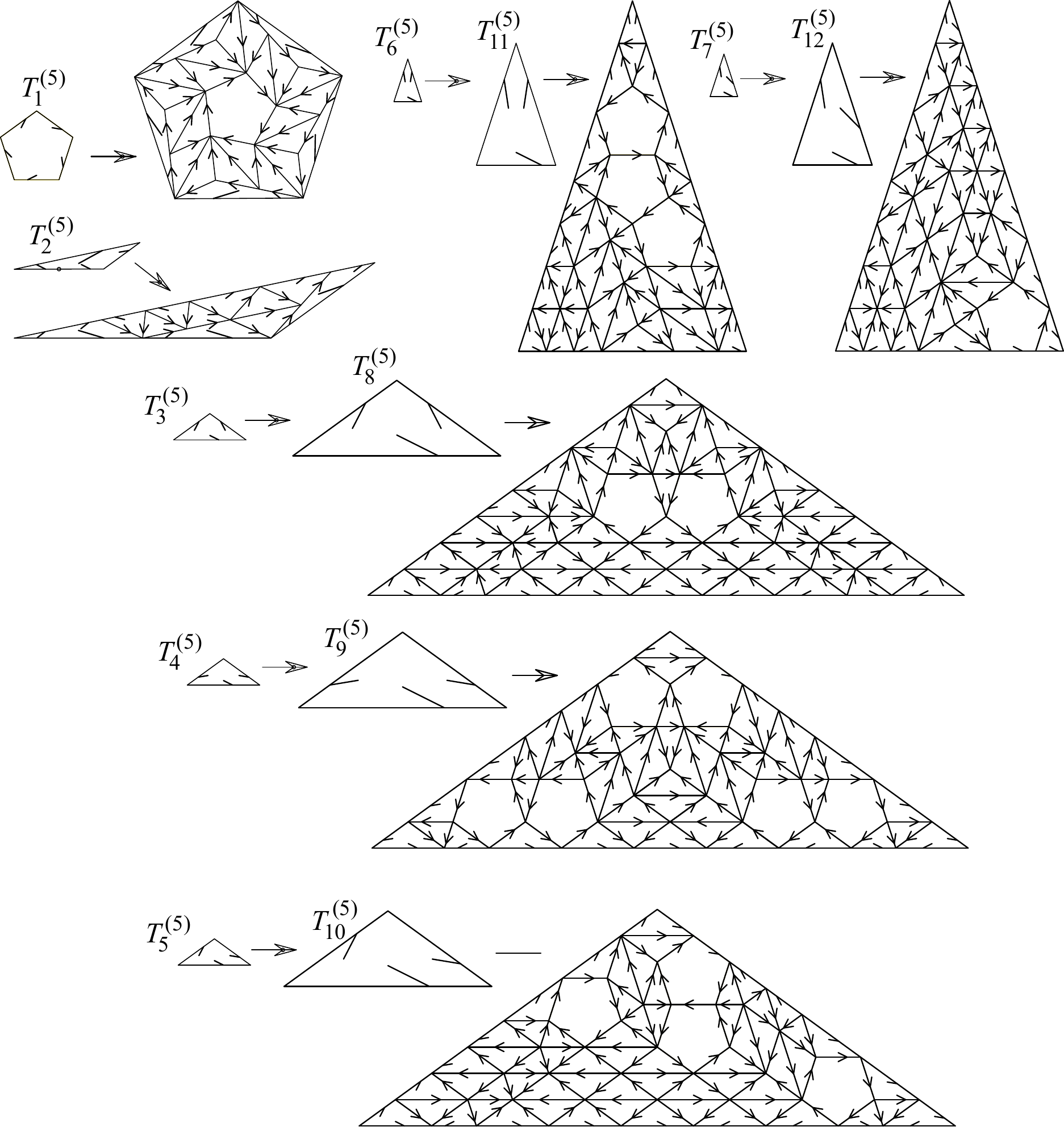} 
\caption{The substitution $\sigma_{5}$. Half arrows indicate the
orientation of edges and tiles. All coincident edges have the 
same orientation. 
\label{fig:subst-5}}
\end{figure}

\begin{prop} \label{prop:n=5}
The substitution $\sigma_5$ is a primitive substitution with DTO. 
The hull contains two aperiodic tilings invariant under 5-fold rotation.
Any tiling in the hull of $\sigma_5$ is FLC with respect to rigid motions.
\end{prop}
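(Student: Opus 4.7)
The plan is to follow closely the template of Propositions \ref{prop:n=3+4}, \ref{prop:n=6}, and \ref{prop:n=8}, with the extra complication that we must use all $12$ prototiles carefully. First I would write down the $12 \times 12$ substitution matrix $M_{\sigma_5}$ by reading off the tile counts from Figure \ref{fig:subst-5} and then verify primitivity by checking that some fixed power $M_{\sigma_5}^k$ has strictly positive entries. Since the 12 prototiles are introduced precisely to control how the hexagonal and rhombic intermediate tiles are replaced, most entries of $M_{\sigma_5}$ come from the tiles $T_2^{(5)}, T_3^{(5)}, T_4^{(5)}, T_5^{(5)}$ appearing densely in $\sigma_5(T_1^{(5)})$, so primitivity should follow after a few iterations.

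For DTO I would apply the same recipe as in Proposition \ref{prop:n=3+4}: let $\alpha$ be the smaller of the irrational base angles of $T_2^{(5)}$, which is irrational by Theorem \ref{thm:irratangle} applied to the parallelogram with interior angle $\frac{2\pi}{5}$ and edge lengths $1$ and $2$. Copies of $T_2^{(5)}$ are lined up along the boundary of $\sigma_5(T_1^{(5)})$ and, after two further iterations, rotated copies of $T_2^{(5)}$ appear along the boundary of $\sigma_5^3(T_1^{(5)})$ rotated by $2\alpha$ relative to the originals. Then Theorem \ref{thm:dto-crit} gives DTO. The orientation convention for coincident edges indicated by the half arrows in Figure \ref{fig:subst-5} has to be respected so that the two chosen $T_2^{(5)}$ tiles are genuinely equivalent (congruent with matching marking), not merely congruent.

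For the two $5$-fold symmetric tilings I would place $T_1^{(5)}$ at the origin and iterate $R_\alpha \sigma_5$ as in the proof of Proposition \ref{prop:n=3+4}: the rotated substitution image contains $T_1^{(5)}$ again in its interior with the correct orientation (which is why the construction demands that the central $n$-gon inherits the orientation of the prototile), so the nested sequence $(R_\alpha \sigma_5)^k(T_1^{(5)})$ converges to a fixed tiling $\T$ that is invariant under rotation by $\frac{2\pi}{5}$. Since the hull is closed under rotations (a consequence of DTO) the patches are legal, and the mirror image of $\T$ yields the second symmetric tiling. Aperiodicity is then obtained by the usual recognition argument: the tiles $T_3^{(5)}, T_4^{(5)}, T_5^{(5)}$ and their companions carry distinct markings so that each can appear only as the image of a unique prototile under $\sigma_5$, which lets one invert $\sigma_5$ uniquely on the hull and excludes any translation symmetry.

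The main obstacle will be FLC with respect to rigid motions, which is precisely the reason the authors enlarged the prototile set from six to twelve. My plan is to verify, using the markings in Figure \ref{fig:subst-5}, that after adding at most finitely many pseudo-vertices along the edges of length $2$ and length $\lambda_5$, the tilings become vertex-to-vertex. Once this is established, the standard argument used in Proposition \ref{prop:n=3+4} applies: there are only finitely many ways two tiles can meet edge-to-edge (or vertex-to-vertex), so the number of patches fitting into a ball of radius $r$ is finite up to congruence, and FLC follows. Carrying out the full enumeration of vertex stars would be tedious but routine, and I would refer the reader to \cite{april-phd} and \cite{FreOySavSay} for the complete catalogue, just as the authors do for the other cases.
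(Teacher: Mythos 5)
Your outline for primitivity (positive power of the $12\times 12$ substitution matrix), DTO (irrational angle $2\alpha$ between copies of $T_2^{(5)}$ in $\sigma_5(T_1^{(5)})$ versus $\sigma_5^3(T_1^{(5)})$, via Theorems \ref{thm:irratangle} and \ref{thm:dto-crit}), the two $5$-fold invariant tilings (iterating $R_\alpha\sigma_5$ on a centred $T_1^{(5)}$ and taking the mirror image), and aperiodicity (unique recognition of first-order supertiles) coincides with what the paper does; the paper indeed records that $M_{\sigma_5}^5$ has only positive entries and otherwise refers back to the proof of Proposition \ref{prop:n=3+4}.

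The one place where you diverge is FLC, and it is the step where your plan is at risk. You propose to add pseudo-vertices so that the tilings become vertex-to-vertex and then to enumerate vertex stars, i.e.\ the argument used for $n\in\{3,4\}$. The paper does \emph{not} argue this way for $n=5$: it derives FLC from the facts that coincident edges carry the same orientation (the half arrows in Figure \ref{fig:subst-5}) and that all edges of a given length are dissected in the same manner under $\sigma_5$. This consistent-edge-dissection argument does not require the tiling to be vertex-to-vertex, and that matters: for $n=8$ the paper explicitly notes that the tilings fail to be vertex-to-vertex even after inserting pseudo-vertices, which is exactly why FLC is left open there. There is no guarantee that your premise (vertex-to-vertex after finitely many pseudo-vertices) holds for $\sigma_5$; indeed the whole point of enlarging the prototile set from six to twelve and orienting the edges is to obtain the edge-compatibility property on which the paper's FLC argument rests. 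You should either verify the vertex-to-vertex claim explicitly or, more safely, replace that step by the edge-orientation argument: since matching edges are always dissected identically and with matching orientation, the set of possible adjacencies between two tiles is finite and stable under substitution, which gives FLC directly.
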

\begin{proof}
As before the proofs of DTO and the existence of a tiling invariant under 
5-fold rotation are along the lines of the proof of Proposition 
\ref{prop:n=3+4}. FLC follows from the fact that coincident edges
have the same orientation and that edges of the same length are dissected
in the same manner under $\sigma_5$.  
Aperiodicity of the tilings can be proven similarly by identifying the
first order supertiles uniquely. More details appear in \cite{april-phd}. 
Primitivity of $\sigma_5$ can be checked by considering the substitution 
matrix $M_{\sigma_5}$ below. Since $M_{\sigma_5}^5$ contains only positive 
entries the substitution $\sigma_5$ is primitive. 
\end{proof}

The substitution matrix for $\sigma_5$ looks as follows.
{\small 
\[M_{\sigma_{5}}=\begin{pmatrix}
 1&0&0&0&0&0&0&2&6&4&3&1 \ \\       
 5&3&0&0&0&0&0&0&0&0&0&0 \ \\
 0&0&0&0&0&0&0&18&9&10&5&11 \ \\
 5&0&0&0&0&0&0&21&8&6&1&2 \ \\
 15&4&0&0&0&0&0&8&18&25&9&8 \ \\
 0&2&0&0&0&0&0&12&6&7&9&11 \ \\
 0&2&0&0&0&0&0&10&12&13&17&17 \ \\
 0&0&1&0&0&0&0&0&0&0&0&0 \ \\
 0&0&0&1&0&0&0&0&0&0&0&0 \ \\
 0&0&0&0&1&0&0&0&0&0&0&0 \ \\
 0&0&0&0&0&1&0&0&0&0&0&0 \ \\
 0&0&0&0&0&0&1&0&0&0&0&0 \ \\
\end{pmatrix}.\]}
The powers of $M_{\sigma_5}$, the 
PF-eigenvalue and the PF-eigenvectors have been computed with 
the computer algebra software (CAS) {\em Scientific Workplace 5.5} 
\cite{SW}. The PF-eigenvalue is $\sqrt{5}+6$ which equals $\lambda_5^2$, 
as it ought to, and its corresponding left PF-eigenvector and 
normalised right PF-eigenvector are respectively given by  
{\small
\begin{equation*}
\boldsymbol{w}=\begin{pmatrix}
\frac{15}{62}+\frac{13}{62}\sqrt{5}\\[1mm]
\frac{12}{31}-\frac{2}{31}\sqrt{5}\\[1mm]
\frac{1}{62}+\frac{5}{62}\sqrt{5}\\[1mm]
\frac{1}{62}+\frac{5}{62}\sqrt{5}\\[1mm]
\frac{1}{62}+\frac{5}{62}\sqrt{5}\\[1mm]
\frac{1}{6+\sqrt{5}}\\[1mm]
\frac{1}{6+\sqrt{5}}\\[1mm]
\frac{1+\sqrt{5}}{2}\\[1mm]
\frac{1+\sqrt{5}}{2}\\[1mm]
\frac{1+\sqrt{5}}{2}\\[1mm]
1\\[1mm]
1\\
\end{pmatrix}^{T}\ \ \ \ 
\mbox{and}\ \ \ \ \ \ 
\boldsymbol{v}=\begin{pmatrix}
\frac{2640247257}{109 180718845}\sqrt{5}-\frac{233289537}{21 836143769}\ \\[1mm]
\frac{2271797364}{21 836143769}\sqrt{5}-\frac{4175144835}{21 836143769}\ \\[1mm]
\frac{-69 639647193}{349 378300304}\sqrt{5}+\frac{200 370426489}{349 378300304}\ \\[1mm]
\frac{-281 379644707}{1746 891501520}\sqrt{5}+\frac{164 659760407}{349 378300304}\ \\[1mm] 
\frac{16 679139843}{21 836143769}\sqrt{5}-\frac{31 075028997}{21 836143769}\ \\[1mm]
\frac{-208 504378761}{873 445750760}\sqrt{5}+\frac{113 488825221}{174 689150152}\ \\[1mm]
\frac{-214 190232831}{873 445750760}\sqrt{5}+\frac{126 605432787}{174 689150152}\ \\[1mm]
\frac{-19 942203537}{349 378300304}\sqrt{5}+\frac{50 013574029}{349 378300304}\ \\[1mm]
\frac{-81 018602267}{1746 891501520}\sqrt{5}+\frac{40 946393779}{349 378300304}\ \\[1mm]
\frac{4230640905}{21 836143769}\sqrt{5}-\frac{8704705587}{21 836143769}\ \\[1mm]
\frac{-58 660335441}{873 445750760}\sqrt{5}+\frac{28 691526777}{174 689150152}\ \\[1mm]
\frac{-61 876405191}{873 445750760}\sqrt{5}+\frac{31 413639663}{174 689150152}\ \\
\end{pmatrix}
\end{equation*}
}
Again the left PF-eigenvector $\boldsymbol{w}$ contains 
the areas of the tiles up to scaling, and the right PF-eigenvector 
$\boldsymbol{v}$ contains the relative frequencies of the tiles.

\section{Conclusion}

We were not able to define a general substitution rule $\sigma_{n}$ 
for all $n$, or at least for all even $n$. Anyway, there is
a general pattern for dissecting $\lambda_n T_2^{(n)}$ ($n \ne 4$) into 
five copies of $T_2^{(n)}$ and four further triangles, and for dissecting 
$\lambda_n T_1^{(n)}$ into one copy of $T_1^{(n)}$, $n$ copies of $T_2^{(n)}$ 
and several parallelograms. Hence it is likely that there are substitution 
tilings with DTO invariant under $n$-fold rotation for all $n \ge 3$. 
This might be also of interest with respect to a comment in \cite{Mal15}:
``...there is a lack of known examples of aperiodic planar tiling families 
with higher orders of rotational symmetry.''. That paper contains
``the first substitution tiling with elevenfold symmetry appearing in the
literature''. Our method might yield further primitive substitution tilings 
with 11-fold and also 12-fold rotational symmetry (though the number of 
prototiles might be huge). Nevertheless, after submission of this paper
we became aware of the work of Kari and Rissanen \cite{KaRi16} containing
substitution tilings with $2n$-fold symmetry for arbitrary $n$.

The proof of Theorem \ref{thm:irratangle} on irrational angles in cyclotomic 
parallelograms uses the fact that the considered irrational angle $\alpha$ 
is smaller than $\frac{\pi}{n}$. Hence the result generalises immediately
to the long diagonals of parallelograms with interior angle $\frac{\pi}{n}$
and with edge lengths $a \ne b \in \Q$ rather than 2 and 1. Using other 
arguments it might be possible to show the irrationality of other angles 
as well, e.g. the angle of the short diagonal.

For the sake of briefness we did not mention further implications of
our constructions in the context of dynamical properties of the hull.
Just to mention a few: the fact that a tiling has FLC ensures the
compactness of the hull of $\sigma_n$. Due to primitivity of $\sigma_n$
all tilings in the hull of $\sigma_n$ are repetitive, hence (1) all tilings 
have uniform patch frequencies, and (2) the hull is minimal. As a 
consequence we obtain: If we denote the hull of $\sigma_n$ by $X_{\sigma_n}$ 
then the dynamical systems $( X_{\sigma_n}, \R^2)$ and $( X_{\sigma_n}, E(2))$ 
(where $E(2)$ denotes the rigid motions in $\R^2$) are 
both uniquely ergodic. For more details on these concepts see
\cite{Sol97, BaaGri13, FreRi}.

\section*{Acknowledgements}
The authors express their gratitude to Franz G\"ahler, Lorenzo Sadun 
and Mike Whittaker for helpful discussions. M.~De  Las Pe\~nas 
acknowledges the support of the German Academic Exchange Service (DAAD) 
(research stay programme). A.~Say-awen gives thanks to the Philippine 
Department of Science and Technology (DOST) (DOST‐ASTHRDP scholarship).

\end{document}